\newtheorem{theorem}{Theorem}[section]
\newtheorem{lemma}[theorem]{Lemma}
\newtheorem{proposition}[theorem]{Proposition}
\newtheorem{corollary}[theorem]{Corollary}
\theoremstyle{definition}
\newtheorem{definition}[theorem]{Definition}
\theoremstyle{remark}
\newtheorem{remark}[theorem]{Remark}
\numberwithin{equation}{section}
\newcommand{\N}{\mbox{$\mathbb{N}$}}
\begin{document}
\setcounter{page}{1}

\title[Prime and maximal ideals in Hurwitz polynomial rings]{\textbf{Prime and maximal ideals in Hurwitz polynomial rings}}

\author[Ali Shahidikia]{Ali Shahidikia}
\address{Department of Mathematics, Dezful branch, Islamic Azad University, Dezful, Iran}
\email{\textcolor[rgb]{0.00,0.00,0.84}{ali.Shahidikia@iaud.ac.ir \,\
		a.shahidikia@gmail.com
}}
 \subjclass{16D15, 16D40, 16D70 }
\keywords{prime ideal, maximal ideal, hurwitz polynomial ring.}

\begin{abstract}
In this paper we study prime and maximal ideals in a Hurwitz polynomial ring $hR$. It is well-known that to study many questions we may assume R is prime and consider just $R$-disjoint ideals.  We give a characterization for an $R$-disjoint ideal to be prime. We study conditions under which there exists an $R$-disjoint ideal which is a maximal ideal and when this is the case how to determine all such maximal ideals. 
maximal ideal to be generated by polynomials of minimal degree.
\end{abstract}

\maketitle
\section{ Introduction}
The study of formal power series rings has garnered noteworthy attention and has been found to be essential in a multitude of fields, particularly in differential algebra. In \cite{kie}, Keigher introduced a variant of the formal power series ring and studied its categorical properties in detail. This ring was later named Hurwitz series ring.
There are many interesting applications of the Hurwitz series ring in differential algebra as Keigher showed in \cite{kieg,kiegh}.\\


Throughout this article, $R$ denotes an associative ring with unity. We denote $H(R)$, or simply $HR$,  the Hurwitz series ring over a ring $R$ whose elements are the functions $f:\N\rightarrow R$, where $\N$ is the set of all natural numbers, the addition is defined as usual and the multiplication given by 
\begin{equation*}
	(f g)(n)= \sum_{k=0}^n \binom{n}{k} f(k)g(n - k)
	\text{ for  all } n \in \N,
\end{equation*}
where $\binom{n}{k}$ is the binomial coefficient.\par


Define the mappings $h_n:\N\rightarrow R$, $n\geq1$ via $h_n(n-1)=1$ and $h_n(m)=0$ for each $ m \in \N\setminus\{n-1\}$ and $h'_r:\N\rightarrow R$, $r\in R$ via $h'_r(0)=r$ and $h'_r(n)=0$ for each $n\in \N\setminus\{0\}$. It can be easily shown that $h_{1}$ is the unity of $HR$.

For $f\in HR$, the support of $f$, denoted by $supp(f)$, is the set $\{i\in\N\mid  f(i)\not=0\}$. The minimal element in $supp(f)$ is denoted by $\Pi(f)$, and $\Delta(f)$ denotes the greatest element in $supp(f)$ if it exists. Define $R'=\{h'_r\mid r\in R\}$. $R'$ is a subring of $HR$ which is isomorphism to $R$. For any nonempty subset $A$ of $R$ define $A'=\{h'_r\mid r\in A\}.$ If $A$ is an ideal of $R$, then $A'$ is an ideal of $R'$.\par

The ring $hR$ of Hurwitz polynomials over a ring $R$ is the subring of $HR$ that consists elements of the form $f\in HR$ with $\Delta(f)<\infty$ (see \cite{M, m2, m1, v2}).\\

Consider the ring $hR$, if $P$ is a prime ideal of $hR$, by factoring out the
ideals $P\cap R$ and $h(P \cap R)$ from $R$ and $hR$, respectively, we may assume that $R$ is prime and $P\cap R = 0$. That is why we will assume here that $R$ is a prime ring. Then $hR$ is also prime. A non-zero ideal (resp. prime ideal) $P$ of $hR$ with $P \cap R = 0$ will be called an $R$-\textit{disjoint ideal} (resp. \textit{prime ideal}).  

The main purpose of this paper is to study maximal ideals of $hR$. There are two
interesting questions concerning maximal ideals that we want to consider here. First, determine all the prime ideals L of $R$ such that there exists a maximal ideal $M$ of $hR$ with $M \cap R=L$.  As we said above, by factoring out $L$ and $hL$ from $R$ and $hR$,
respectively, we may assume that $L = 0$. Second, assume that there exists a maximal ideal
$M$ of $hR$ which is $R$-disjoint. Then determine all these ideals. In Section \ref{sec3} of this paper
we study these questions.



From now on $R$ is a prime ring with unity. If $I$ is an $R$-disjoint ideal of $hR$, then by $\rho(I)$ (resp. $\tau(I)$) we denote the ideal of $R$ consisting of $0$ and all the leading coefficients of all the polynomials (resp. polynomials of minimal degree) in $I$.  The
minimality of $I$ is defined by $\text{Min}(I) = \text{min}\{\Delta(f)~ |~f\in I\setminus \{0\}\}$. We denote by $Z = Z(R)$ the center of $R$.

We point out that ideal means always two-sided ideal and $R$-disjoint maximal ideal means maximal ideal which is $R$-disjoint.
\section{Prime ideals}
\smallskip
First we recall the following lemma.
\begin{lemma}\label{1.1}
	Let $P$ be an $R$-disjoint ideal of $hR$. The following are equivalent:
	\begin{enumerate}
		\item  $P$ is a prime ideal of $hR$;
		\item $P$ is maximal in the set of $R$-disjoint ideals of $hR$.
	\end{enumerate}
\end{lemma}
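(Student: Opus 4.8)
The plan is to prove the two implications separately, the implication (2)$\Rightarrow$(1) being routine and (1)$\Rightarrow$(2) carrying the real content. For (2)$\Rightarrow$(1) I would suppose that $P$ is maximal among the $R$-disjoint ideals and, toward a contradiction, that $AB\subseteq P$ for ideals $A,B$ of $hR$ with $A\not\subseteq P$ and $B\not\subseteq P$. Then $A+P$ and $B+P$ strictly contain $P$, so by maximality neither can be $R$-disjoint; hence $A_0:=(A+P)\cap R$ and $B_0:=(B+P)\cap R$ are \emph{nonzero} ideals of $R$. Since $P$ is an ideal and $AB\subseteq P$, we have $(A+P)(B+P)\subseteq AB+AP+PB+P^2\subseteq P$, and therefore $A_0B_0\subseteq (A+P)(B+P)\cap R\subseteq P\cap R=0$. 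As $R$ is prime this forces $A_0=0$ or $B_0=0$, a contradiction, so $P$ is prime. This direction uses only that $R$ is prime and that $P\cap R=0$.

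For the converse (1)$\Rightarrow$(2) I would assume $P$ is a prime $R$-disjoint ideal, take an $R$-disjoint ideal $Q$ with $P\subseteq Q$, and aim to show $Q=P$. The guiding idea is that $hR$ is a \emph{one-variable} extension of $R$: as a left $R'$-module it is free on $h_1=1,h_2,h_3,\dots$, so an element is determined by its coefficient sequence and $R$-disjointness of an ideal $I$ says exactly that $I$ meets the degree-zero summand $R'$ in $0$. First I would record that $\rho(P)\subseteq\rho(Q)$ are ideals of $R$, nonzero because $P$ and $Q$ are $R$-disjoint. Then, choosing $g\in Q\setminus P$ of least degree $\Delta(g)$, I would run a degree-descent: using a minimal-degree element of $P$ to cancel the leading term of $g$, and using the primeness of $R$ (so that the coefficient transitions needed to match leading terms can be realized) together with the primeness of $P$, I would produce from $g$ either a nonzero element of $Q$ of strictly smaller degree still outside $P$, or a nonzero element of $Q\cap R$. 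The first alternative contradicts the minimal choice of $g$ and the second contradicts $Q\cap R=0$; either way $Q\setminus P=\emptyset$, i.e.\ $Q=P$, which is the maximality asserted in (2).

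The step I expect to be the main obstacle is the behaviour of leading coefficients under multiplication. In $hR$ the top coefficient of a product is $\binom{\Delta(f)+\Delta(g)}{\Delta(f)}\,f(\Delta(f))\,g(\Delta(g))$, so the naive identity $\Delta(fg)=\Delta(f)+\Delta(g)$ can fail: the binomial coefficient may vanish in $R$ in positive characteristic (already $h_2^{2}=2h_3$, which is $0$ when $2=0$ in $R$), reflecting the divided-power nature of the variable. Consequently neither the claim that $\rho(P)$ is an ideal nor the degree-descent is automatic, since closing $\rho(P)$ under addition and reducing degrees both require aligning degrees by multiplication, and that is precisely where the vanishing binomials interfere.

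I would handle this by organizing the descent around elements of \emph{minimal} degree and their leading coefficients $\tau(\,\cdot\,)$ rather than around top degrees, keeping explicit track of the binomial factors and inserting central elements of $Z$ where possible to avoid the ones that collapse, and by using the primeness of $R$ (so that for nonzero $a,b\in R$ some product $arb$ with $r\in R$ is nonzero) to keep the relevant coefficients from vanishing. In this way the divided-power obstruction is converted into a statement about the nonzero ideals $\rho(P)\subseteq\rho(Q)$ of the prime ring $R$, where the ordinary prime-ideal arithmetic of the previous implication can again be applied.
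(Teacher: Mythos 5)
Your proof of (2)$\Rightarrow$(1) is complete and correct: passing to $A+P$ and $B+P$, using maximality to conclude that $A_0=(A+P)\cap R'$ and $B_0=(B+P)\cap R'$ are nonzero ideals of $R$, and then contradicting primeness of $R$ via $A_0B_0\subseteq (A+P)(B+P)\cap R'\subseteq P\cap R'=0$ is exactly right, and it works in any characteristic. (For what it is worth, the paper offers no argument at all for this lemma --- it is declared ``straightforward'' --- so there is no method of the paper to compare yours against; each half must stand on its own.)

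The direction (1)$\Rightarrow$(2), however, contains a genuine gap: what you give is a plan, and the plan cannot be executed as described. First, your descent cancels the leading term of a minimal-degree $g\in Q\setminus P$ against a minimal-degree element of $P$, which presupposes $\Delta(g)\geq \mathrm{Min}(P)$; nothing prevents the larger ideal $Q$ from containing nonzero elements of degree strictly smaller than $\mathrm{Min}(P)$, and that case is never treated. Second, and decisively, the divided-power obstruction you correctly flag is not cured by your proposed remedy. Aligning degrees forces you to multiply a polynomial of degree $n$ up to degree $m>n$, and in $hR$ this scales the leading coefficient by an integer: $(h_{m-n+1}f)(m)=\binom{m}{n}f(n)$. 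If $\binom{m}{n}=0$ in $R$, the leading term is annihilated outright, and no insertion of central elements and no appeal to primeness of $R$ can restore it, because the vanishing factor is an integer scalar multiplying the whole expression. The failure is total rather than technical: when $2=0$ in $R$ and $\Delta(f)=\Delta(g)=1$, one has $(gf)(2)=2\,g(1)f(1)=0$, so a product of two degree-one polynomials never has degree $2$, and there is simply no descent to organize. Note that the lemma may still be \emph{true} in that situation, but for entirely different structural reasons: if $R$ is commutative with $2=0$, one checks that $f^2=h'_{f(0)^2}$ for every $f\in hR$, so $N=\{f\in hR\mid f(0)=0\}$ is a nil ideal, it is the unique $R$-disjoint prime ideal of $hR$, and it is visibly maximal among $R$-disjoint ideals since any ideal strictly containing $N$ contains some $h'_{f(0)}\neq 0$. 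So a correct proof of (1)$\Rightarrow$(2) must either impose a hypothesis such as $\mathbb{Q}\subseteq R$ --- in which case $f\mapsto \sum_n f(n)x^n/n!$ identifies $hR$ with $R[x]$ and the classical polynomial-ring argument applies --- or split into cases according to the torsion in $R$ and argue structurally in positive characteristic. Your write-up does neither; the sentence promising to ``keep explicit track of the binomial factors'' marks precisely the place where the proof still has to be written.
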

\begin{proof}
	The proof is straightforward.
\end{proof}
We define the following set:
\[\Gamma=\{f\in hR~|~\Delta(f)\geq1 \text{~and~} h'_ah'_rf=fh'_rh'_a, \text{~for every~} r\in R, \text{~where~} a=f(\Delta(f))\}.\]
For $f\in \Gamma$ with $a=f(\Delta(f))$ we put
\[[f]=\{g\in hR~|~\text{~there exists~} 0\not=J\vartriangleleft R \text{~such that~} gJ'h'_a\subseteq hRf\}.\]
One can see that $[f]$ is an $R$-disjoint ideal of $hR$.
An ideal of this type is said to be a \textit{(principal) closed ideal} of $hR$. Actually,  $[f]$ is the unique
closed ideal of $hR$ containing $f$ and satisfying $\text{Min}([f]) = \Delta(f)$.

\begin{definition}\label{1.2}	We say that a polynomial $f\in \Gamma$ is \textit{completely irreducible} in $\Gamma$ (or
	$\Gamma$-completely irreducible) if the following condition is satisfied:\\	
	
	If there exist $b \in R$, $g \in \Gamma$ and $h\in hR$ such that $0\not=fh'_b=hg$, then $\Delta(g)=\Delta(f)$.
\end{definition}
This definition is symmetrical. In fact, we have the following result.
\begin{lemma}\label{1.3}
A polynomial $f\in \Gamma$ is $\Gamma$-completely irreducible if and only if for every $b \in R$, $h \in \Gamma$ and $g\in hR$ such that $0\not=fh'_b=hg$ we necessarily have $\Delta(h)=\Delta(f)$.	
\end{lemma}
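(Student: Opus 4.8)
The plan is to deduce the asserted left--right symmetry from the natural anti-isomorphism between Hurwitz polynomials over $R$ and over the opposite ring $R^{\mathrm{op}}$. Concretely, I would introduce the map $\psi\colon hR\to h(R^{\mathrm{op}})$ that sends a function $f\colon\N\to R$ to the same function, now read with values in $R^{\mathrm{op}}$. Because $\binom{n}{k}=\binom{n}{n-k}$, a direct check of the Hurwitz product shows that $\psi$ is a ring anti-isomorphism: reindexing $k\mapsto n-k$ transforms $(fg)(n)=\sum_{k}\binom{n}{k}f(k)g(n-k)$ into the $R^{\mathrm{op}}$-product of $\psi(g)$ and $\psi(f)$, so $\psi(fg)=\psi(g)\,\psi(f)$. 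Since $\psi$ is the identity on underlying functions it preserves supports, hence $\Pi$, $\Delta$, and leading coefficients. This is the natural tool here precisely because Definition~\ref{1.2} and the statement of the present lemma differ only by interchanging the roles of the two factors, and interchanging factors is exactly what an anti-isomorphism does.

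Next I would record that $\psi$ carries $\Gamma$ to its analogue $\Gamma^{\mathrm{op}}$ for $h(R^{\mathrm{op}})$. Writing $a=f(\Delta(f))$ and reading $h'_ah'_rf=fh'_rh'_a$ off degree by degree, membership $f\in\Gamma$ is equivalent to the coefficientwise relation $a\,r\,f(n)=f(n)\,r\,a$ for all $r\in R$ and all $n$. This relation is manifestly invariant under passing to $R^{\mathrm{op}}$, so $f\in\Gamma$ if and only if $\psi(f)\in\Gamma^{\mathrm{op}}$, with the same leading coefficient $a$. Applying $\psi$ to a relation $0\neq fh'_b=hg$ in $hR$ turns it into $0\neq h'_b\,\psi(f)=\psi(g)\,\psi(h)$ in $h(R^{\mathrm{op}})$: the factors $h$ and $g$ change places, and the constant $h'_b$ migrates to the other side of $f$. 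Thus the hypothesis of Definition~\ref{1.2}, which constrains the factor $g$, is transported into a hypothesis on the factor occupying the symmetric position, which is the shape of the condition in Lemma~\ref{1.3}.

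The main obstacle is precisely the displacement of the constant $h'_b$: under $\psi$ the right multiplication $fh'_b$ becomes a left multiplication $h'_b\,\psi(f)$, so the transported statement concerns factorizations of $h'_b\,\psi(f)$ rather than of $\psi(f)\,h'_{b'}$. To bridge this I would use the defining relation of $\Gamma$ itself. Since $f(n)\,r\,a=a\,r\,f(n)$ for every $n$, we have the exact identity $fh'_{ra}=h'_{ar}f$, which trades a right constant of the form $ra$ for the left constant $ar$. Reducing an arbitrary $b$ to this situation is where the primeness of $R$ enters: multiplying $fh'_b=hg$ on the right by a suitable $h'_{ra}$ lands the constant attached to $f$ in the form $b'a$ to which the $\Gamma$-identity applies, and primeness guarantees that introducing the leading coefficient $a$ as a factor annihilates neither the relation nor any of the relevant degrees (equivalently, that the leading coefficients of the Hurwitz products involved do not collapse). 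I expect this constant-placement bookkeeping, together with the verification that products of nonzero leading coefficients stay nonzero, to be the only genuinely delicate point of the argument.

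Finally, once the constant has been moved to the correct side, the transported form of Definition~\ref{1.2} over $R^{\mathrm{op}}$ becomes literally the condition of Lemma~\ref{1.3} over $R$. Running $\psi$ in the reverse direction (using that the same construction yields an anti-isomorphism $h(R^{\mathrm{op}})\to hR$) gives the opposite implication, so that only one implication really needs to be carried out and the other follows by applying it to $R^{\mathrm{op}}$. Combining the two yields the stated equivalence: $f\in\Gamma$ is $\Gamma$-completely irreducible if and only if, in every factorization $0\neq fh'_b=hg$, the factor lying in $\Gamma$ has degree equal to $\Delta(f)$, irrespective of which side it occupies.
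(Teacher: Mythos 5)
Your individual ingredients are all sound: $\psi$ is indeed an anti-isomorphism of $hR$ onto $h(R^{\mathrm{op}})$ preserving supports, degrees and $\Gamma$ (the coefficientwise description $a\,r\,f(n)=f(n)\,r\,a$ is correct), and for $f\in\Gamma$ the identity $fh'_{ra}=h'_{ar}f$ together with primeness does let you move constants across $f$ without killing anything. The problem is architectural: your two moves compose to the identity, not to the swap you need. Classify the four variants of the condition by the pair (which side of $f$ the constant sits on, which factor is required to lie in $\Gamma$). Definition~\ref{1.2} is (right, right factor); the condition of Lemma~\ref{1.3} is (right, left factor). Applying $\psi$ swaps \emph{both} coordinates simultaneously and replaces $(f,R)$ by $(\psi(f),R^{\mathrm{op}})$, so Definition~\ref{1.2} for $f$ over $R$ becomes the (left, left) condition for $\psi(f)$ over $R^{\mathrm{op}}$; your constant-trading then swaps only the first coordinate, yielding the (right, left) condition --- but for $\psi(f)$ over $R^{\mathrm{op}}$, not for $f$ over $R$. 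Pulling that back along $\psi$ to a statement about $f$ turns it into the (left, right) condition over $R$, and one more trade returns you to Definition~\ref{1.2}. The chain closes on itself: starting from Definition~\ref{1.2} for $f$ you can never reach the Lemma~\ref{1.3} condition for the same $f$ over the same $R$. The sentence ``the transported form of Definition~\ref{1.2} over $R^{\mathrm{op}}$ becomes literally the condition of Lemma~\ref{1.3} over $R$'' silently identifies ``$\psi(f)$ is completely irreducible over $R^{\mathrm{op}}$'' with ``$f$ is completely irreducible over $R$,'' and after unwinding the same dictionary that identification \emph{is} the lemma; the argument is circular. What your machinery honestly proves is only this: if one implication of the lemma holds for every prime ring and every $f\in\Gamma$, then so does the other. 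One implication still has to be proved by hand, and your proposal contains no proof of either.

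That hand proof is the real content, and it is what the paper supplies via closed ideals. Given $0\neq fh'_b=hg$ with $h\in\Gamma$ and $\Delta(h)<\Delta(f)$, one must manufacture a factorization with $h$ on the \emph{other} side, i.e.\ $0\neq fh'_dh'_c=ph$ with $p\in hR$, in order to contradict Definition~\ref{1.2}. The paper gets this by noting that $fh'_b=hg$ lies in the two-sided ideal $[h]$, hence $[f]=[fh'_b]\subseteq[h]$ (here the first equality is the primeness computation you also sketch), and then the very definition of $[h]$ produces a nonzero ideal $J$ with $fJ'h'_c\subseteq hRh$, where $c=h(\Delta(h))$; primeness gives $d\in J$ with $fh'_dh'_c\neq 0$, and any such element is a left multiple $ph$ of $h$. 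This conversion of right multiples of $h$ into left multiples of $h$ is accomplished by the ideal structure of $[h]$, not by any formal left--right symmetry, and nothing in your proposal replaces it. Where your $\psi$-duality could legitimately be used is at the very end: it can substitute for the paper's closing remark that the converse implication ``can be proved in a similar way using an obvious symmetric version of the definition of $[f]$,'' since deducing the second implication from the first, uniformly over all prime rings, is exactly the kind of statement pure symmetry can deliver.
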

\begin{proof}
	If $f\in \Gamma$ and $fh'_b\not=0$, $b\in R$, we have $fh'_b\in \Gamma$ and $[fh'_b]=[f]$. In fact, assume that $ab=0$, where $a=f(\Delta(f))$. Then $h'_ah'_rfh'_bfh'_rh'_ah'_b=0$, for every $r\in R$. Thus, $fh'_b=0$ since $R$ is prime, a contradiction. Hence $\Delta(fh'_b)=\Delta(f)$ and $[fh'_b]=[f]$.
	
	Suppose that there is $b\in R$, $h\in \Gamma$ and $g\in hR$ such that $0\not=fh'_b=hg$ with $\Delta(h)<\Delta(f)$. Then $[f]=[fh'_b] \subset[h]$. Therefore there is $0\not=J\vartriangleleft R$ such that $fJh'_c\subseteq hRh$, where $c=h(\Delta(h))$. Take $d\in J$ and $p\in hR$ with $0\not=fh'_dh'_c=ph$. This shows that $f$ is not $\Gamma$-completely irreducible. So the result holds in one direction. The converse can be proved in a similar way using an obvious symmetric version of the definition of $[f]$.
	\end{proof}

	Now we are in position to prove the main result of this section.
	\begin{theorem}\label{1.4}
		Let $P$ be an $R$-disjoint ideal of $hR$. Then the following conditions are equivalent:
		\begin{enumerate}
			\item $P$ is prime;
			\item  $P$ is closed and every $f\in P$ with $\Delta(f) = \text{Min}(P)$ is completely irreducible in $\Gamma$;
			\item $P$ is closed and there exists $f\in P$ with $\Delta(f) = \text{Min}(P)$ which is completely
			irreducible in $\Gamma$.
		\end{enumerate}
	\end{theorem}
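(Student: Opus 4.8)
The plan is to prove the cycle $(1)\Rightarrow(2)\Rightarrow(3)\Rightarrow(1)$, after using Lemma \ref{1.1} to replace ``$P$ is prime'' by ``$P$ is maximal in the set of $R$-disjoint ideals''. The implication $(2)\Rightarrow(3)$ is immediate: if $P$ is closed then $P=[f_0]$ for some $f_0\in\Gamma$ with $\Delta(f_0)=\text{Min}(P)$, so an element of $\Gamma$ of minimal degree in $P$ exists, and $(2)$ makes it completely irreducible. The real content lies in $(1)\Rightarrow(2)$ and $(3)\Rightarrow(1)$, and both rest on one device: a division/reduction procedure in $hR$ adapted to non-invertible leading coefficients, in the spirit of the computations in the proof of Lemma \ref{1.3}. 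Concretely, for $f\in\Gamma$ with leading coefficient $a$ and any $p\in hR$ with $\Delta(p)\ge\Delta(f)$, one multiplies on the right by a suitable $h'_a$ and by an ideal $J'$ so that the leading coefficient of $pJ'h'_a$ becomes a multiple of $a$ and can be cancelled by a left multiple of $f$, lowering the degree; here primeness of $R$ guarantees that the ideals produced are nonzero (for instance, if $J\neq 0$ then $Ja\neq 0$, since $J(RaR)=(JRa)R=0$ would force $RaR=0$ and hence $a=0$).

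For $(1)\Rightarrow(2)$ I would first show that a prime $R$-disjoint ideal $P$ is closed. Fix $f\in\Gamma\cap P$ of degree $\text{Min}(P)$, whose existence follows from the structure of the leading-coefficient ideal $\tau(P)$ together with the $\Gamma$-commutation relation $arf(n)=f(n)ra$. The inclusion $[f]\subseteq P$ is the ``prime'' direction: for $p\in[f]$ we have $pJ'h'_a\subseteq hRf\subseteq P$, and since $JaR$ is a nonzero ideal of $R$ the left side generates an ideal meeting $R$ nontrivially, so $R$-disjointness of $P$ together with primeness forces $p\in P$. The reverse inclusion $P\subseteq[f]$ is the division direction: reducing any $p\in P$ modulo $f$ leaves a remainder in $P$ of degree $<\text{Min}(P)$, hence $0$, so $p\in[f]$. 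Thus $P=[f]$ is closed. For complete irreducibility, suppose some minimal $f'\in\Gamma\cap P$ admits $0\neq f'h'_b=hg$ with $g\in\Gamma$ and $\Delta(g)\neq\Delta(f')$. Using the degree analysis of Hurwitz products together with the symmetric form of the definition from Lemma \ref{1.3} (to place the smaller factor on the appropriate side), one arranges $\Delta(g)<\Delta(f')=\text{Min}(P)$; then $[g]$ is $R$-disjoint with $\text{Min}([g])<\text{Min}(P)$, and from $f'h'_b=hg\in hRg$ together with the closed-ideal containment properties one obtains $P=[f']=[f'h'_b]\subseteq[g]$, contradicting maximality.

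For $(3)\Rightarrow(1)$, assume $P=[f]$ with $f$ completely irreducible of minimal degree, and suppose toward a contradiction that some $R$-disjoint ideal $Q$ satisfies $P\subsetneq Q$. If $\text{Min}(Q)<\text{Min}(P)$, choose $g\in\Gamma\cap Q$ of degree $\text{Min}(Q)<\Delta(f)$ (again via the leading-coefficient ideal $\tau(Q)$ and the $\Gamma$-condition); the division procedure produces $b\in R$ and $h\in hR$ with $0\neq fh'_b=hg$, which contradicts the complete irreducibility of $f$. If instead $\text{Min}(Q)=\text{Min}(P)$, then reducing any $q\in Q$ modulo $f$ yields a remainder in $Q$ of degree $<\text{Min}(Q)$, hence $0$, so $q\in[f]=P$ and $Q\subseteq P$, contradicting $P\subsetneq Q$. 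In either case $P$ is maximal among $R$-disjoint ideals, hence prime by Lemma \ref{1.1}.

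The main obstacle is the division/reduction step and the attendant degree bookkeeping. Because leading coefficients need not be central or invertible, and because the binomial coefficients in the Hurwitz product can vanish in positive characteristic, neither the existence of an exact factorization $fh'_b=hg$, nor the degree identity $\Delta(hg)=\Delta(h)+\Delta(g)$, nor the containment $[f']\subseteq[g]$ is automatic; making each of these precise is exactly where the defining commutation relation of $\Gamma$, the symmetry supplied by Lemma \ref{1.3}, and the primeness of $R$ must be used together. Once these are in hand, the two easy inclusions for primeness and the degree comparisons are routine.
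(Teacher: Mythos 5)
Your architecture is the same as the paper's (Lemma \ref{1.1} to trade primeness for maximality among $R$-disjoint ideals, the identity $[fh'_b]=[f]$, and containments of closed ideals), but the engine you rest both hard implications on --- a division/reduction procedure in $hR$ --- is a genuine gap, and it is precisely the step the paper is structured to avoid. In $hR$ the coefficient of $hg$ in degree $\Delta(h)+\Delta(g)$ is $\binom{\Delta(h)+\Delta(g)}{\Delta(h)}\,h(\Delta(h))\,g(\Delta(g))$, so in positive characteristic left multiples of $g$ do not have controllable leading terms: for instance if $2=0$ in $R$ and $\Delta(g)=1$, then $(h_2g)(2)=2\,g(1)=0$, so one cannot cancel the top coefficient of a given polynomial by subtracting a left multiple of $g$. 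You flag exactly this in your closing paragraph as ``the main obstacle'' but never resolve it; yet your proof of $P\subseteq[f]$ in $(1)\Rightarrow(2)$, and the claim in $(3)\Rightarrow(1)$ that ``the division procedure produces $b\in R$ and $h\in hR$ with $0\neq fh'_b=hg$,'' are nothing but that unresolved step. The paper never divides. For (iii)$\Rightarrow$(i) it uses Lemma \ref{1.1} (with Zorn) to embed a non-prime $P=[f]$ in a closed ideal $[g]$ with $\Delta(g)<\Delta(f)$, and then the factorization comes for free from the \emph{definition} of membership in $[g]$: $f\in[g]$ means $fJ'h'_c\subseteq hRg$ for some $0\neq J\vartriangleleft R$, and choosing $d\in J$ with $fh'_dh'_c\neq 0$ gives the forbidden factorization exactly as in the proof of Lemma \ref{1.3}. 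Likewise, for (i)$\Rightarrow$(ii) the paper simply invokes ``prime implies closed'' as known, rather than re-deriving it by a remainder argument.

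A second, smaller error: in your complete-irreducibility step you say you will ``arrange $\Delta(g)<\Delta(f')$.'' You cannot --- if $\Delta(g)>\Delta(f')$ no symmetry trick makes it smaller. That case is instead \emph{impossible}: from $0\neq f'h'_b=hg$ with $g\in\Gamma$ one gets $P=[f']=[f'h'_b]\subseteq[g]$, whence $\text{Min}(P)\geq\text{Min}([g])=\Delta(g)$. The paper's direct argument avoids any case split: $P\subseteq[g]$ plus maximality of $P$ among $R$-disjoint ideals gives $P=[g]$ outright, so $\Delta(g)=\text{Min}(P)=\Delta(f')$, which is the desired conclusion rather than a contradiction. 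Finally, note that the existence of $f\in\Gamma\cap P$ with $\Delta(f)=\text{Min}(P)$ (equivalently, that $R$-disjoint primes are closed) is asserted without proof by the paper as well, so gesturing at $\tau(P)$ is not a gap relative to the paper; the problem is that in your plan even this ingredient is routed through the division device you never construct.
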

	\begin{proof}
		(i)$\Rightarrow$(ii).  If $P$ is prime, then $P$ is closed. Assume that $f]in P$, $\Delta(f)=\text{Min}(P)$ and $0\not=fh'_b=hg$, for $b\in R$ and $g\in \Gamma$. Hence $P=[f]=[fh'_b]\subseteq[g]$. Thus $P=[g]$ by Lemma \ref{1.3}. Consequently $\Delta(g)=\text{Min}(P)=\Delta(f)$.
		
		(iii)$\Rightarrow$(i).  Assume that $P = [f]$, where $f$is a $\Gamma$-completely irreducible polynomial. If $P$
		is not prime there is a closed ideal $[g]$ of $hR$, $g \in \Gamma$, such that $P\subset [g]$. It follows that
		$\Delta(g) < \Delta(f)$ and there is $0\not=J \vartriangleleft R$ such that $fJ'h'_c \subseteq hRg$, where $c = g(\Delta(g))$. Now we get a
		contradiction as in the proof of Lemma \ref{1.3}.
	\end{proof}
	It can easily be checked that if $R$ is a commutative domain and $F$ is the field of
	fractions of $R$, then a polynomial $f\in hR$ with $\Delta(f)\geq1$ is completely irreducible in $\Gamma$ if
	and only if $f$is irreducible in $hF$.
	
	More generally, note that if $f\in hZ$, $Z$ the center of $R$, and $\Delta(f)\geq1$, then $f\in \Gamma$. We
	have
	\begin{corollary}\label{1.5}
	Suppose that $f \in hZ$. Then the following are equivalent:
	\begin{enumerate}
		\item $f$ is completely irreducible in $\Gamma$;
		\item $f$ is irreducible in $hC$, where $C$ is the extended centroid of $R$.
	\end{enumerate}
	\end{corollary}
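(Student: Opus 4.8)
The plan is to reduce the noncommutative condition of $\Gamma$-complete irreducibility in $hR$ to ordinary irreducibility in $hC$ by passing to the central closure $S=RC$, whose center and extended centroid both equal the field $C$, and by setting up a dictionary between elements of $\Gamma$ and monic polynomials over $C$. The cornerstone is the following observation: if $g\in\Gamma$ has leading coefficient $a=g(\Delta(g))$, then unwinding the defining relation $h'_ah'_rg=gh'_rh'_a$ yields $a\,r\,g(n)=g(n)\,r\,a$ for all $r\in R$ and all $n$. By the linearization theory of the extended centroid (if $a\,x\,c=c\,x\,a$ for all $x$ in a prime ring and $a\neq0$, then $c\in Ca$), every coefficient satisfies $g(n)=\lambda_n a$ with $\lambda_n\in C$, and normalizing at the top degree gives $\lambda_{\Delta(g)}=1$. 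Hence $g=g'h'_a$ where $g'\in hC$ is monic of degree $\Delta(g)$. This identity is what lets factorizations migrate between $hR$ and $hC$; and since $f\in hZ\subseteq hC$, the coefficients of $f$ are already central in $S$, so $f$ commutes with every constant $h'_c$.

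For (2)$\Rightarrow$(1), suppose to the contrary that there are $b\in R$, $g\in\Gamma$ and $h\in hR$ with $0\neq fh'_b=hg$ and $\Delta(g)\neq\Delta(f)$; comparing degrees (note $\Delta(fh'_b)=\Delta(f)$ since $f\in\Gamma$) forces $\Delta(g)<\Delta(f)$, while $g\in\Gamma$ gives $\Delta(g)\geq1$. Writing $g=g'h'_a$ and using that $g'$ has central coefficients (so $g'h'_a=h'_ag'$), the relation becomes $fh'_b=(hh'_a)g'$. I would then divide $f$ by the monic polynomial $g'$ inside $hC$, say $f=qg'+s$ with $\Delta(s)<\Delta(g')$, multiply through by $h'_b$, and subtract to obtain $sh'_b=(hh'_a-qh'_b)g'$. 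A degree count forces the right-hand side, and hence $sh'_b$, to vanish; since each $s(n)\in C$ and $b\neq0$ in the prime ring $S$, each $s(n)b=0$ gives $s(n)=0$, so $s=0$. Then $f=qg'$ is a proper factorization in $hC$ with $1\leq\Delta(g')<\Delta(f)$, contradicting irreducibility.

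For (1)$\Rightarrow$(2), assume $f$ is completely irreducible and suppose $f=uv$ nontrivially in $hC$ with $\Delta(u),\Delta(v)\geq1$; after scaling by a unit of $C$ we may take $v$ monic. The task is to push this factorization down into $hR$. Choosing nonzero ideals $I,I'\vartriangleleft R$ that clear the finitely many denominators, i.e. $v(n)I\subseteq R$ and $u(n)I'\subseteq R$ for all $n$, and using that $R$ is prime so $I'I\neq0$, I pick $a\in I$ and $a'\in I'$ with $a'a\neq0$. Then $g:=vh'_a\in hR$, and the computation $a\,r\,g(n)=g(n)\,r\,a$ (valid because $v$ has central coefficients) shows $g\in\Gamma$ with leading coefficient $a$ and $\Delta(g)=\Delta(v)$; likewise $h:=h'_{a'}u\in hR$. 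Since $f$ has central coefficients, $h'_{a'}f=fh'_{a'}$, whence $hg=h'_{a'}(uv)h'_a=fh'_{a'a}=fh'_b$ with $b=a'a$. Here $fh'_b\neq0$ because its leading coefficient $f(\Delta(f))\,a'a$ is nonzero (a nonzero central element of a prime ring is regular and $a'a\neq0$), yet $\Delta(g)=\Delta(v)<\Delta(f)$; this contradicts complete irreducibility, so $f$ is irreducible in $hC$.

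The main obstacle is exactly the passage through the extended centroid made explicit in the first paragraph: the linearization step $c\in Ca$ and the careful clearing of denominators while keeping the product $a'a$ nonzero are where primeness of $R$ is genuinely used. A secondary point requiring care is the multiplicativity of the degree function and the validity of monic division in $hC$: the leading coefficient of a Hurwitz product carries a binomial factor $\binom{\Delta(f)+\Delta(g)}{\Delta(f)}$, which can vanish in positive characteristic, so the degree bookkeeping and the division algorithm used above are cleanest when $C$ has characteristic zero; in general I would isolate the behaviour of these binomial coefficients as a preliminary lemma before running the argument.
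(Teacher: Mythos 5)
Your proof is correct (in the characteristic-zero/torsion-free setting; see the caveat below) but it takes a genuinely different route from the paper's. The paper's proof is structural and very short because it delegates the work to the closed-ideal machinery: for (ii)$\Rightarrow$(i) it invokes the representation $[f]=hQf_0\cap hR$ of the closed ideal generated by $f\in\Gamma$, where $f_0\in hC$ is monic and $Q$ is the maximal right quotient ring, writes $f=f_0h'_c$, quotes \cite[Corollary 2.7]{B1} to get that $[f]$ is prime when $f_0$ is irreducible in $hC$, and then converts primeness of $[f]$ into complete irreducibility via Theorem \ref{1.4}; for (i)$\Rightarrow$(ii) it observes that a factorization $f=gh$ in $hC$ yields an $R$-disjoint ideal $hQg\cap hR$ properly containing $[f]$, contradicting Lemma \ref{1.1}. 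You replace all of this by element-level arguments: Martindale's characterization of the extended centroid ($arc=cra$ for all $r$ forces $c\in Ca$) gives you the dictionary $g=g'h'_a$ between elements of $\Gamma$ and monic polynomials in $hC$ --- this is exactly the paper's identity $f=f_0h'_c$, but proved rather than quoted --- monic division in $hC\cong C[x]$ replaces the citation of \cite[Corollary 2.7]{B1}, and denominator-clearing replaces the $hQg\cap hR$ construction, so that you contradict Definition \ref{1.2} directly and never need Theorem \ref{1.4}, Lemma \ref{1.1}, or closed ideals at all. What the paper's route buys is brevity; what yours buys is a self-contained proof that makes visible exactly where primeness and the extended centroid are used.

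Two points of care. First, in your (2)$\Rightarrow$(1) the assertion that ``comparing degrees forces $\Delta(g)<\Delta(f)$'' is not automatic in a noncommutative ring, since leading coefficients can multiply to zero, so $\Delta(hg)\geq\Delta(g)$ needs justification; but your own dictionary repairs this: $hg=(hh'_a)g'$ with $g'$ monic and central, so if $\Delta(g)>\Delta(f)$ the same degree count gives $\Delta(fh'_b)\geq\Delta(g')>\Delta(f)$, a contradiction. Second, your closing caveat about binomial coefficients in positive characteristic is well taken, but it is not a loss of generality relative to the paper: when $2R=0$, the element $h_2$ is central in $hR$ with $h_2^2=2h_3=0$, so $hR$ is not even semiprime and the paper's standing claim that $hR$ is prime (as well as the meaning of ``irreducible in $hC$'') already breaks down; the whole framework implicitly assumes the torsion-free case, in which your argument is complete.
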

	\begin{proof}
		(i)$\Rightarrow$(ii).
		Assume that $f = gh$, for $g, h \in hC$,$\Delta(g)<\Delta(f)$ and $\Delta(h)<\Delta(f)$. Then $[f]\subset hQg\cap hR$, where $Q$ is the maximal right quotient ring of $R$. This is a contradiction by Lemma \ref{1.1} because $[f]$ is prime.
		
		(ii)$\Rightarrow$(i).
		Since $f\in \Gamma$ there is a monic polynomial $f_0\in hC$ such that $[f]=hQf_0\cap hR$. Then $f=f_0h'_c$, for $c=f(\Delta(f))\in Z\subseteq C$. Thus $f_0$ is irreducible in $hC$ by the assumption and so $[f]$ is prime \cite[Corollary 2.7]{B1}. Consequently $f$is $\Gamma$-completely
		irreducible by Theorem \ref{1.4}.
	\end{proof}
	\begin{remark}\label{1.6}
	Let $R$ be a unique factorization commutative domain and
	$f\in hR$. Then $f$ is completely irreducible in $\Gamma$ if and only if $f$ is irreducible in $hR$.
	To prove this fact we first recall that a polynomial $g \in hR$ is said to be \textit{primitive} if
	the greatest common divisor of $g(i)$, $i=0,1,2,\ldots$ is $1$.  Also, a primitive polynomial is
	irreducible if and only if it is irreducible in $hF$, where $F$ is the field of fractions of $R$.
	
	Now, if $f$ is completely irreducible in $\Gamma$, then it is clearly irreducible in $hR$. Conversely, if $f$is irreducible in $hR$ and $f$ is primitive, then $f$ is irreducible in $hF$ and
	we apply Corollary \ref{1.5}. In general, there are $d \in R$ and a primitive polynomial $g\in hR$ such that $f=h'_dg$. Since $f$ is irreducible so is $g$. Hence $g$ is completely irreducible in $\Gamma$ and consequently so is $f$.
	\end{remark}
	\section{Maximal ideals}\label{sec3}
	As we said in the introduction, there are two interesting
	questions concerning maximal ideals that we want to consider here. First, determine all
	the prime ideals $L$ of $R$ such that there exists a maximal ideal $M$ of $hR$ with $M\cap R = L$.
	By factoring out convenient ideals we may assume that $L = 0$. Second, determine all the
	$R$-disjoint maximal ideals of $hR$ in case the set of all these ideals is not empty.
	
	We extend the terminology used for commutative rings \cite{B2}. If $R$ is a prime ring, then the intersection of all the non-zero prime ideals of $R$ is called the 
	\textit{pseudo-radical} of $R$ and
	is denoted by $\textrm{ps}(R)$.
	
	We begin this section with the following extension of \cite[Lemma 3]{B3}.
	\begin{lemma}\label{2.1}
	Let $P$ be an $R$-disjoint prime ideal of $hR$ and $L$ be a non-zero	prime ideal of $R$. If $\rho(P) \nsubseteq L$, then $(P + hL) \cap R' = L'$.	
	\end{lemma}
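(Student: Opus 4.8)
The plan is to prove the non-trivial inclusion $(P+hL)\cap R'\subseteq L'$, the reverse one being immediate since $L'\subseteq hL\subseteq P+hL$ and $L'\subseteq R'$. So I would take $h'_c\in(P+hL)\cap R'$ and write $h'_c=p+\ell$ with $p\in P$ and $\ell\in hL$. Comparing coefficients gives $p(n)=-\ell(n)\in L$ for every $n\ge 1$, while $p(0)=c-\ell(0)$ with $\ell(0)\in L$; hence the statement reduces to the single claim that \emph{if $p\in P$ satisfies $p(n)\in L$ for all $n\ge 1$, then $p(0)\in L$}. Before attacking this I would record that, since $L$ is a non-zero prime ideal of $R$, the residue ring $S=R/L$ is prime, so by the remark in the introduction $hS\cong hR/hL$ is prime; thus $hL$ is itself a prime ideal of $hR$, and the coefficientwise reduction $\phi\colon hR\to hS$ has kernel exactly $hL$.

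Next I would reformulate the claim inside $hS$. The image $\phi(P)$ is an ideal of $hS$, and the hypothesis $\rho(P)\nsubseteq L$ furnishes $g\in P$ with leading coefficient $b=g(\Delta(g))\notin L$; then $\phi(g)\ne 0$ has degree exactly $\Delta(g)\ge 1$, so $\phi(P)$ is a non-zero ideal containing an element of positive degree with non-vanishing leading coefficient. Now $p(n)\in L$ for $n\ge 1$ means $\phi(p)=h'_{\overline{p(0)}}$, so the claim is exactly that $\phi(P)\cap S'=0$, i.e. that $\phi(P)$ is an $S$-disjoint ideal of $hS$. Assume for contradiction that $\phi(P)$ contains a non-zero constant $h'_{\bar s}$. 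Using that $S$ is prime I would choose $r\in R$ with $\bar s\,\bar r\,\bar b\ne 0$, and then play $h'_{\bar s}$ off against $\phi(g)$: in the polynomial model this is the step where one observes that a factor of degree $\ge 1$ surviving modulo $L$ cannot divide a non-zero constant. Concretely I would try to manufacture, from $p$, $g$ and the shift operators $h_n$, either an honest degree-zero element of $P$ (contradicting $P\cap R'=0$), or an element of $P$ of degree strictly smaller than the minimal degree of a $P$-element whose leading coefficient lies outside $L$ (contradicting that minimality, which is available because $\rho(P)\nsubseteq L$). Either contradiction would force $\bar s=0$, hence $p(0)\in L$, and the lemma would follow, with Lemma \ref{1.1} guaranteeing the prime/maximal interplay.

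The step I expect to be the genuine obstacle is this last one, and it is peculiar to the Hurwitz setting: the binomial coefficients $\binom{n}{k}$ in the Hurwitz product may themselves lie in $L$. When $L$ lies over a rational prime, a factor $\binom{n}{k}$ can vanish modulo that prime, so multiplying by a shift $h_n$ may push a leading coefficient that is \emph{outside} $L$ back \emph{into} $L$ (and, dually, may create non-zero constants in $\phi(P)$ having no honest degree-zero preimage). Consequently the clean degree-control argument that proves the polynomial analogue in \cite{B3} does not transfer verbatim, and the heart of the matter is to keep the relevant leading coefficients away from $L$ in spite of these binomial factors. I would therefore scrutinise exactly this point first: whether $b$ must be taken of \emph{minimal} degree, whether primeness of $L$ can be made to absorb the offending binomials, or whether an extra hypothesis (for instance $\mathbb{Q}\subseteq R$, under which $hR\cong R[x]$ and the polynomial proof applies directly) is in fact needed for the statement as written.
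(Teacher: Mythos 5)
Your opening reductions are correct and coincide with the paper's own first step: the containment $L'\subseteq (P+hL)\cap R'$ is trivial, and writing $h'_c=p+\ell$ with $p\in P$, $\ell\in hL$ reduces the lemma to showing that no $g\in P$ can satisfy $g(0)\notin L$ and $g(i)\in L$ for all $i\geq 1$ (this is exactly the polynomial $g$ the paper extracts). One side remark: your claim that $hS\cong hR/hL$ is prime because $S=R/L$ is prime is false in general --- if $S$ has characteristic $p$ then $h_2^p=p!\,h_{p+1}=0$ in $hS$, so $hS$ is not even semiprime --- but you never actually use primeness of $hL$, only the coefficientwise reduction $\phi$, so this does not hurt your plan. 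What does hurt it is that the proof then stops: the degree-reduction contradiction is only described as something to ``manufacture,'' with the binomial coefficients flagged as the obstacle. Since that step is the entire content of the lemma, your proposal has a genuine gap.

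However, your diagnosis of the obstacle is exactly right, and in fact it is fatal: the missing step cannot be supplied, because the lemma is false as stated. Take $R=\Z$, $L=2\Z$ and $u=h_1+h_2\in h\Z$. Under $f\mapsto\sum_i f(i)x^i/i!$ the ring $h\mathbb{Q}$ is identified with $\mathbb{Q}[x]$ and $u$ with $1+x$; put $P=u\,h\mathbb{Q}\cap h\Z$. Then $P$ is a non-zero prime ideal of $h\Z$ with $P\cap R'=0$, and $u\in P$ has leading coefficient $1$, so $\rho(P)\nsubseteq L$. But $h_2^2=2h_3$, hence
\[
u^2=(h_1+h_2)^2=h_1+2h_2+2h_3\in P,\qquad u^2-h_1=2h_2+2h_3\in hL,
\]
so $h'_1=u^2-(2h_2+2h_3)\in (P+hL)\cap R'$ although $1\notin L$; in your language, $\phi(u^2)=h_1$, so $\phi(P)$ is all of $hS$. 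The mechanism is precisely the one you isolated: the leading coefficient of $u^2$ is $\binom{2}{1}=2\in L$, whereas over $R[x]$ a product of polynomials with leading coefficients outside a prime of a commutative base ring keeps its leading coefficient outside it. Consistently, the paper's own proof --- the polynomial argument transplanted verbatim --- collapses at the very point you flagged: its displayed element $gh'_ch'_{f(n)}-h'_{g(m)}h'_ch_{m-n+1}f$ has coefficient $\bigl(1-\binom{m}{n}\bigr)g(m)cf(n)$ in degree $m=\Delta(g)$, which need not vanish, so no contradiction with the minimality of $\Delta(g)$ is obtained (in the example above it equals $-2$); and its second case factors $h=g'h_2$ out of a polynomial with $h(0)=0$, which is impossible in general since $(g'h_2)(i)=i\,g'(i-1)$ forces division by integers. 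Your closing suggestion is also correct: if $\mathbb{Q}\subseteq R$, then $f\mapsto\sum_i f(i)x^i/i!$ gives $hR\cong R[x]$ and the statement reduces to the known polynomial-ring lemma, so some hypothesis of this kind is genuinely required.
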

	\begin{proof}
		If $h_2\in P$, then $P=(h_2)(hR)$ and for $h'_r\in   (P+hL)\cap R'$ we easily obtain $h'_r\in L'$. So we may assume $h_2\not\in P$.
assume that there exists $r\in  R\setminus L$ such that $h'_r = f_1 + f_2$, for some $f_1in P$ and $f_2\in hL$. It follows that there exists	 $g\in P$ with $g(0)\not\in L$ and $g(i)\in L$, $i=1,2,\ldots,\Delta(g)$.  Take such a $g$ of minimal degree with respect to these conditions and choose a polynomial $f\in P$ of minimal degree with respect to $f(n)\not\in L$, $n=\Delta(f)$.	

By the assumption there exists $c \in R$ such that $g(0)cf(n)\not\in  L$. If $\Delta(g)\geq n$ we have $gh'_ch'_{f(n)}-h'_{g(m)}h'_ch_{m-n+1}f\in P$, which contradicts the minimality of $\delta(g)$. In case $\Delta(g)> n$ put
$h=h'_(f(0))h'_cf-gh'_ch'_{g(0)}\in P$. Then $h=g'h_2$, where $g'\in hR$ with $g'(n-1)=f(0)cg(n)\not\in L$. Since $P$ is prime and $h_2\not\in P$ we have $g'\in P$, contradicting the minimality of $n$.
	\end{proof}
	\begin{corollary}\label{2.2}
		Let $M$ be a maximal ideal of $hR$  with $M \cap R = 0$. Then $0\not=\rho(M)\subseteq \textrm{ps}(R)$.		
	\end{corollary}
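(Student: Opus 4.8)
The plan is to derive both conclusions from Lemma~\ref{2.1} together with the maximality of $M$. I begin with the structural observations needed to invoke that lemma. Since $hR$ is a ring with unity $h_1$, every maximal ideal is prime; and as $M$ is proper with $M\cap R=0$ it is a non-zero $R$-disjoint ideal. Hence $M$ is a non-zero $R$-disjoint \emph{prime} ideal, which is exactly the hypothesis required by Lemma~\ref{2.1}.

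The inequality $\rho(M)\neq0$ is immediate: choosing any non-zero $f\in M$, the element $\Delta(f)$ exists and $f(\Delta(f))\neq0$ by the definition of $supp(f)$, so the leading coefficient of $f$ is a non-zero element of $\rho(M)$.

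The substance of the corollary is the inclusion $\rho(M)\subseteq\textrm{ps}(R)$, that is, $\rho(M)\subseteq L$ for every non-zero prime ideal $L$ of $R$. I would argue by contradiction. Suppose $\rho(M)\nsubseteq L$ for some non-zero prime ideal $L$. Then Lemma~\ref{2.1} yields $(M+hL)\cap R'=L'$. Now $hL$ is an ideal of $hR$, so $M+hL$ is an ideal containing the maximal ideal $M$; by maximality, either $M+hL=M$ or $M+hL=hR$. If $M+hL=M$, then $hL\subseteq M$, so $L'\subseteq M\cap R'=0$ (as $M$ is $R$-disjoint), forcing $L=0$, contrary to $L\neq0$. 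If $M+hL=hR$, then $(M+hL)\cap R'=R'$, so $L'=R'$ and hence $L=R$, contradicting that $L$ is a proper ideal. Both cases are impossible, so $\rho(M)\subseteq L$; since $L$ was an arbitrary non-zero prime ideal, $\rho(M)\subseteq\textrm{ps}(R)$.

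The proof is short once Lemma~\ref{2.1} is in hand, so I do not anticipate a serious obstacle. The only points requiring care are confirming that the hypotheses of Lemma~\ref{2.1} genuinely hold (maximal $\Rightarrow$ prime, and $R$-disjoint $\Rightarrow$ non-zero) and correctly reading the identification of $R$ with $R'$ inside $hR$, so that $M\cap R'=0$ and $L'=R'$ translate into the statements $L=0$ and $L=R$ in $R$.
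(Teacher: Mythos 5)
Your proof is correct and takes essentially the same route as the paper: both results follow from Lemma \ref{2.1} combined with the maximality of $M$, the paper asserting $M+hL=hR$ outright and reading Lemma \ref{2.1} contrapositively, while you assume $\rho(M)\nsubseteq L$ and rule out the two alternatives $M+hL=M$ and $M+hL=hR$. The only cosmetic difference is that your first case makes explicit the observation, left implicit in the paper, that $hL\nsubseteq M$ because $L'\cap M\subseteq R'\cap M=0$.
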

	\begin{proof}
		Since $M\not=0$ we have $\rho(M)\not= 0$. If $L$ is a non-zero prime ideal of $R$, we have
		$M + hL= hR$. So $\rho(M) \subseteq L$ by Lemma \ref{2.1}.
	\end{proof}
	Corollary \ref{2.2} shows that if there exists an $R$-disjoint maximal ideal of $hR$, then
	$\textrm{ps}(R)\not= 0$. The converse is not true in general.
	\begin{proposition}\label{2.3}
	Let $R$ be a prime ring such that every non-zero ideal of $R$ contains a central element. Then there exists an $R$-disjoint maximal ideal of $hR$ if and only if
	$\textrm{ps}(R)\not=0$.
	\end{proposition}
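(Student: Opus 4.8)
The plan is to treat the two implications separately: the forward one is immediate, while the converse requires producing an explicit generator, closely modelled on the commutative G-domain picture in which $\textrm{ps}(R)\neq 0$ makes a suitable localization simple.

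For the direction ``$R$-disjoint maximal ideal $\Rightarrow \textrm{ps}(R)\neq 0$'', suppose $M$ is a maximal ideal of $hR$ with $M\cap R=0$. Then Corollary \ref{2.2} already gives $0\neq\rho(M)\subseteq\textrm{ps}(R)$, so $\textrm{ps}(R)\neq 0$ at once. This half uses neither the central-element hypothesis nor any construction.

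For the converse I assume $\textrm{ps}(R)\neq 0$. Since $\textrm{ps}(R)$ is a non-zero ideal of $R$, the hypothesis supplies a non-zero central element $z\in\textrm{ps}(R)\cap Z$, and because $z$ is central in $R$ the constant $h'_z$ is central in $hR$. I would then take $f_0=h'_zh_2-h_1\in hZ$, which has $\Delta(f_0)=1$ and hence lies in $\Gamma$. As $\Delta(f_0)=1$ is as small as possible for members of $\Gamma$, $f_0$ is completely irreducible in $\Gamma$, so by Theorem \ref{1.4} the closed ideal $M:=[f_0]$ is an $R$-disjoint prime ideal; by Lemma \ref{1.1} it is maximal among all $R$-disjoint ideals of $hR$.

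The real content is upgrading this to genuine maximality in $hR$. I would take an arbitrary ideal $N\supsetneq M$ and aim at $N=hR$. Since $M$ is maximal among $R$-disjoint ideals, $N$ cannot be $R$-disjoint, so $I:=N\cap R$ is a non-zero ideal of $R$. This is where the pseudo-radical enters: every prime of $R$ containing $I$ is non-zero and therefore contains $\textrm{ps}(R)\ni z$, so $z$ lies in the radical of $I$; as $z$ is central, a routine multiplicative-set argument (an ideal maximal with respect to avoiding the central set $\{z^n\}$ is prime) yields $z^k\in I$ for some $k\geq 1$. Hence $(h'_z)^k=h'_{z^k}\in N$. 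Finally I exploit the relation carried by $f_0\in N$, namely $h'_zh_2\equiv h_1\pmod N$: raising to the $k$-th power and pushing the central factors $h'_z$ to the front gives $h_1\equiv(h'_zh_2)^k=(h'_z)^kh_2^k\equiv 0\pmod N$, so $h_1\in N$ and $N=hR$. Thus $M$ is maximal, and since $M\cap R=0$ it is the required $R$-disjoint maximal ideal.

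The step I expect to be the main obstacle is precisely this last bootstrap: extracting $z^k\in I$ from the fact that $z$ lies in every prime over $I$ (the central radical lemma), and then combining $(h'_z)^k\in N$ with the relation coming from $f_0$ to force $h_1\in N$. The centrality of $z$, hence of $h'_z$ in $hR$, is exactly what legitimises both manoeuvres, and it is also the point at which the standing hypothesis---that $\textrm{ps}(R)$, being non-zero, contains a central element---is indispensable.
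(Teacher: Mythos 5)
Your proof is correct, and its core is the same as the paper's: the forward implication is Corollary \ref{2.2} in both cases, and for the converse the paper likewise picks $0\neq c\in Z\cap\textrm{ps}(R)$, forms $f=ch_2+h_1$ (your $f_0$ up to sign), and uses that $[f]$ is an $R$-disjoint prime ideal. Where you genuinely diverge is the upgrade from ``maximal among $R$-disjoint ideals'' to ``maximal ideal of $hR$.'' The paper takes a maximal ideal $I\supseteq[f]$ and argues: if the containment were strict, $I\cap R$ would be non-zero, and since $I$ (being maximal, hence prime) contracts to a prime ideal of $R$, we get $\textrm{ps}(R)\subseteq I\cap R$, so $h'_c\in I$ and then $h_1=f-h'_ch_2\in I$, a contradiction. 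You instead take an \emph{arbitrary} ideal $N\supsetneq M$, where you only know $N\cap R$ is a non-zero ideal (not prime), and compensate with the central $m$-system lemma to extract a power $z^k\in N\cap R$; centrality of $h'_z$ then lets you raise $h'_zh_2\equiv h_1 \pmod N$ to the $k$-th power and force $h_1\in N$. Your route is longer but buys something concrete: it avoids the fact that contractions of prime ideals of $hR$ to $R$ are prime, which the paper uses silently (its inclusion $\textrm{ps}(R)\subseteq I\cap R'$ requires $I\cap R$ to be prime, not merely non-zero); the power trick is exactly the price of working with a non-prime contraction. One small caveat common to both arguments: your claim that $f_0$ is completely irreducible because $\Delta(f_0)=1$ is minimal in $\Gamma$ tacitly assumes $\Delta(g)\leq\Delta(hg)$ for $hg\neq 0$, which in Hurwitz rings is not automatic (binomial coefficients can kill leading terms in positive characteristic); but the paper asserts the primeness of $[f]$ with no justification at all, so this is not a gap relative to the paper's own standard of rigor.
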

	\begin{proof}
		Suppose that $\textrm{ps}(R)\not=0$ and take $0\not=c\in Z\cap \textrm{ps}(R)$. Put $f=ch_2+h_1$. Then $f\in \Gamma$ and $[f]$ is an $R$-disjoint prime ideal of $hR$. If $I$ is a maximal ideal with $I\supset [f]$ we have $I\cap R'\not=0$. Hence $c\in \textrm{ps}(R)\subseteq I\cap R'$ and $h_1\in I$, a contradiction. Thus $[f]$ is a maximal ideal. The proof is complete by Corollary \ref{2.2}.
	\end{proof}
	Now we give a more general criterion. Denote by $h_1+h_2h\textrm{ps}(R)$ the set of all the polynomials $f\in hR$ with $f(0)=1$ and $f(i)\in \textrm{ps}(R)$ for $i=1,\ldots,\Delta(f)$.
	\begin{proposition}\label{2.4}
		Let $M$ be an $R$-disjoint maximal ideal of $hR$. Then one of the following possibilities occurs:
		\begin{enumerate}
			\item $h_2\in M$, $R$ is simple and $M=h_2hR$;
			\item  $h_2\not\in M$ and $M\cap (h_1+h_2h\textrm{ps}(R))\not=\emptyset$.
		\end{enumerate}
	\end{proposition}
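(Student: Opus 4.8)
The plan is to use three facts at the outset: an $R$-disjoint maximal ideal $M$ is in particular prime (being maximal among all ideals it is maximal among $R$-disjoint ideals, so Lemma \ref{1.1} applies), $h_2$ is central in $hR$ (indeed $(h_2f)(n)=nf(n-1)=(fh_2)(n)$), and by Corollary \ref{2.2} we already know $0\neq\rho(M)\subseteq\textrm{ps}(R)$; in particular $\textrm{ps}(R)\neq 0$, which I will reuse. I would then split into the two cases according to whether $h_2\in M$.

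Case $h_2\in M$, aiming at (i). Since $\Delta(h_2)=1$ with leading coefficient $h_2(1)=1$, the unit $1$ lies in $\rho(M)$, so $\rho(M)=R$; feeding this into Corollary \ref{2.2} gives $\textrm{ps}(R)=R$. This says $R$ has no non-zero proper prime ideal, and since a proper maximal ideal is prime, $R$ has no non-zero proper ideal at all, i.e. $R$ is simple. It remains to prove $M=h_2hR$. The inclusion $h_2hR\subseteq M$ is immediate, and centrality of $h_2$ makes $h_2hR$ a two-sided ideal; I would analyze the quotient via the evaluation map $\varepsilon\colon hR\to R$, $f\mapsto f(0)$, a surjective ring homomorphism with $\ker\varepsilon=\{f\mid f(0)=0\}\supseteq h_2hR$. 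Once one knows $\ker\varepsilon=h_2hR$, the isomorphism $hR/h_2hR\cong R$ shows $h_2hR$ is maximal (as $R$ is simple), whence $M=h_2hR$.

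Case $h_2\notin M$, aiming at (ii). The efficient route is to reformulate the claim. Set $N=\{p\in hR\mid p(0)=0,\ p(i)\in\textrm{ps}(R)\text{ for all }i\geq 1\}$; one checks directly that $N$ is a two-sided ideal of $hR$ and that $h_1+h_2h\textrm{ps}(R)=h_1+N$ as subsets. Then $M\cap(h_1+h_2h\textrm{ps}(R))\neq\emptyset$ is equivalent to $h_1\in M+N$, and since $M$ is maximal this is in turn equivalent to $N\not\subseteq M$. To prove $N\not\subseteq M$, suppose the contrary. For each $s\in\textrm{ps}(R)$ the element $h_2h'_s$ satisfies $(h_2h'_s)(0)=0$, $(h_2h'_s)(1)=s$, and $(h_2h'_s)(i)=0$ for $i\geq 2$, so $h_2h'_s\in N\subseteq M$. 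Using centrality of $h_2$, $h_2\,g\,h'_s=g\,(h_2h'_s)\in M$ for every $g\in hR$, i.e. $h_2\,hR\,h'_s\subseteq M$; as $M$ is prime and $h_2\notin M$ this forces $h'_s\in M\cap R'=0$, so $s=0$. Thus $\textrm{ps}(R)=0$, contradicting $\textrm{ps}(R)\neq 0$. Hence $N\not\subseteq M$, and writing $h_1=m+p$ with $m\in M$, $p\in N$ yields $m\in M$ with $m(0)=1$ and $m(i)=-p(i)\in\textrm{ps}(R)$ for $i\geq 1$, i.e. $m\in M\cap(h_1+h_2h\textrm{ps}(R))$.

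I expect the main obstacle to be the identification $\ker\varepsilon=h_2hR$ in case (i): writing an arbitrary $f$ with $f(0)=0$ as $h_2g$ amounts to solving $f(n)=n\,g(n-1)$, i.e. dividing the Hurwitz coefficients by $n$. This is transparent when $R$ is a $\mathbb{Q}$-algebra but is precisely where the divided-power nature of $hR$ intervenes, so the careful handling of this equality (or a substitute argument establishing that $h_2hR$ is maximal) is where the real work lies. By contrast, case (ii) is comparatively soft once the reformulation $N\not\subseteq M$ is in place, its only inputs being primeness of $M$, centrality of $h_2$, and $\textrm{ps}(R)\neq 0$.
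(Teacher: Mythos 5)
Your case (ii) is correct, and it is essentially the paper's own argument made rigorous: the paper merely asserts that $M+h_2h\textrm{ps}(R)=hR$ and then extracts $f\in M\cap(h_1+h_2h\textrm{ps}(R))$, whereas you supply exactly the missing justification, namely that the ideal $N=\{p\in hR\mid p(0)=0,\ p(i)\in\textrm{ps}(R)\ \text{for}\ i\ge 1\}$ is not contained in $M$, proved from primeness of $M$, centrality of $h_2$, and Corollary \ref{2.2}. (With the paper's stated definition of the set $h_1+h_2h\textrm{ps}(R)$, your $h_1+N$ is literally that set, so the reformulation is faithful.) Your derivation in case (i) that $R$ must be simple --- from $1=h_2(1)\in\rho(M)$ and Corollary \ref{2.2} --- is also correct, and is a detail the paper omits entirely; its whole treatment of case (i) is the sentence ``If $h_2\in M$, then (i) follows.''

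However, the obstacle you flag in case (i) is a genuine gap, and in fact it cannot be closed in the stated generality: the equality $\ker\varepsilon=h_2hR$ is not merely delicate in positive characteristic, it is false there, and so is conclusion (i) itself. Since $(h_2g)(n)=n\,g(n-1)$, one has $h_2hR=\{f\mid f(0)=0\ \text{and}\ f(n)\in nR\ \text{for all}\ n\ge 1\}$. Take $R=\mathbb{F}_p$. Then $h_2hR=\{f\mid f(n)=0\ \text{whenever}\ p\mid n\}$, while $M=\ker\varepsilon=\{f\mid f(0)=0\}$ is an $R$-disjoint maximal ideal of $hR$ (the quotient is the field $\mathbb{F}_p$) which contains $h_2$ but also contains $h_{p+1}\notin h_2hR$; hence $M\supsetneq h_2hR$ and (i) fails. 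So no argument can complete your case (i) (or the paper's) without a characteristic hypothesis. In characteristic zero your plan does go through: a simple ring of characteristic $0$ has center a field containing $\mathbb{Q}$, so $R$ is a $\mathbb{Q}$-algebra and $\ker\varepsilon=h_2hR$ holds; then for any $f\in M$ one has $f-h'_{f(0)}\in h_2hR\subseteq M$, whence $h'_{f(0)}\in M\cap R'=0$ and $f\in h_2hR$, giving $M=h_2hR$ directly from $R$-disjointness, without your detour through maximality of $h_2hR$. In short: your part (ii) matches the paper and is sound; your part (i) is incomplete exactly where you said it was, and what you identified as ``where the real work lies'' is in fact a counterexample to the unproved claim the paper passes over.
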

	\begin{proof}
		If $h_2\in M$, then (i) follows. suppose that $h_2\not\in M$. Then $h_1+h_2h\textrm{ps}(R)\nsubseteq M$ and so $M+h_2h\textrm{ps}(R)=hR$. Thus there is $f\in M$ and $g\in \textrm{ps}(R)$ such that $f+h_2g=h_1$. Consequently $f\in h_1+h_2h\textrm{ps}(R)$ and we are done.
	\end{proof}
	\begin{corollary}\label{2.5}
		Let $R$ be a prime ring which is not simple and let $M$ be a
		prime ideal of $hR$ with $M \cap R = 0$. Then $M$ is a maximal ideal if and only if $M\cap(h_1+h_2h\textrm{ps}(R))\not=\emptyset$.
	\end{corollary}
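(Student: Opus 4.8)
The plan is to prove Corollary~\ref{2.5} by combining Proposition~\ref{2.4} with the hypothesis that $R$ is not simple, so that one of the two alternatives there is immediately ruled out. First I would establish the forward direction: suppose $M$ is a maximal ideal with $M\cap R=0$. Since $M$ is then an $R$-disjoint maximal ideal, Proposition~\ref{2.4} applies and gives two possibilities. Possibility (i) forces $R$ to be simple; since by hypothesis $R$ is not simple, this case cannot occur, so we must be in case (ii), which yields precisely $M\cap(h_1+h_2h\textrm{ps}(R))\neq\emptyset$. This is the easy half and follows almost verbatim from the proposition once the simplicity alternative is discarded.

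For the converse I would assume $M$ is a prime ideal with $M\cap R=0$ satisfying $M\cap(h_1+h_2h\textrm{ps}(R))\neq\emptyset$, and aim to show $M$ is maximal among \emph{all} ideals of $hR$ (not merely among $R$-disjoint ones). The starting point is Lemma~\ref{1.1}, which tells us that, being prime and $R$-disjoint, $M$ is already maximal in the lattice of $R$-disjoint ideals. Thus it suffices to show that any ideal $N$ with $M\subsetneq N$ must satisfy $N\cap R\neq 0$, and then to promote this to $N=hR$. The natural strategy is to take $f\in M\cap(h_1+h_2h\textrm{ps}(R))$, so $f(0)=1$ and $f(i)\in\textrm{ps}(R)$ for $1\le i\le\Delta(f)$, and to exploit the fact that $N$, being strictly larger than $M$, contains some element witnessing $N\cap R\neq 0$.

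The key mechanism will be Lemma~\ref{2.1}: if $N\cap R=L\neq 0$ is a (prime) ideal of $R$ with $\rho(M)\nsubseteq L$, then $(M+hL)\cap R'=L'$, which constrains how the coefficients of elements of $M$ interact with $L$. The point is that $f(i)\in\textrm{ps}(R)\subseteq L$ for every nonzero prime $L$ and all $i\ge 1$, while $f(0)=1\notin L$; feeding $f$ into the machinery of Lemma~\ref{2.1} (or re-running its coefficient-chasing argument directly on the polynomial $f$) should force $1\in L$, i.e. $L=R$, and hence $N=hR$. The clean conceptual version is: $f\equiv h_1 \pmod{h_2 h\,\textrm{ps}(R)}$, so modulo any nonzero prime $L$ of $R$ the image of $f$ collapses to the unit $h_1$; since $N\supsetneq M$ must meet $R$ nontrivially in some such $L$, the relation $f\in M\subseteq N$ then drags the unity into $N$.

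The main obstacle I anticipate is the converse, specifically the step that upgrades ``$M$ is maximal among $R$-disjoint ideals'' to ``$M$ is maximal among all ideals.'' One must rule out the possibility that $M$ is properly contained in an ideal $N$ that itself meets $R$ nontrivially yet is still proper; the condition $M\cap(h_1+h_2h\textrm{ps}(R))\neq\emptyset$ is exactly what blocks this, but making the contact with Lemma~\ref{2.1} precise requires verifying its hypothesis $\rho(M)\nsubseteq L$. Here Corollary~\ref{2.2} is the crucial input: it guarantees $0\neq\rho(M)\subseteq\textrm{ps}(R)$, and since $R$ is not simple $\textrm{ps}(R)$ is a genuine constraint, so one can choose the prime $L$ above $N\cap R$ so that $\rho(M)\nsubseteq L$ fails only in degenerate cases that the hypothesis on $f$ already excludes. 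Handling this interplay carefully---ensuring the prime $L$ can be selected to trigger Lemma~\ref{2.1} while the witness polynomial $f$ forces $1\in L$---is where the real work lies; the rest is bookkeeping with leading coefficients and the definition of $\textrm{ps}(R)$.
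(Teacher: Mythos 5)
Your forward direction is fine and is exactly the paper's: since $R$ is not simple, alternative (i) of Proposition \ref{2.4} is impossible, so (ii) gives $M\cap(h_1+h_2h\textrm{ps}(R))\not=\emptyset$. The problem is the converse, where the machinery you propose cannot be made to work. First, invoking Corollary \ref{2.2} to get $0\not=\rho(M)\subseteq\textrm{ps}(R)$ is circular: that corollary is a statement about ideals already known to be maximal, which is precisely what you are trying to prove about $M$. Second, even ignoring the circularity, the containment $\rho(M)\subseteq\textrm{ps}(R)\subseteq L$ holds for \emph{every} non-zero prime ideal $L$ of $R$ (by the very definition of the pseudo-radical), so the hypothesis $\rho(M)\nsubseteq L$ of Lemma \ref{2.1} is never satisfiable in this situation; it does not ``fail only in degenerate cases'' --- it always fails. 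Third, Lemma \ref{2.1} points in the wrong direction anyway: its conclusion $(M+hL)\cap R'=L'$ says exactly that $h_1\notin M+hL$, i.e.\ it \emph{prevents} the unity from being dragged in, which is why the paper uses it only for the necessity statement (Corollary \ref{2.2}), never for sufficiency.

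Your ``clean conceptual version'' is, however, the right argument, and it needs no appeal to Lemma \ref{2.1} or Corollary \ref{2.2} at all; this is what the paper means by ``as in Proposition \ref{2.3}.'' Suppose $M$ is not maximal; since every proper ideal of a ring with unity lies in a maximal ideal, choose a maximal ideal $I$ with $M\subsetneq I\subsetneq hR$. By Lemma \ref{1.1}, $M$ is maximal among $R$-disjoint ideals, so $I\cap R\not=0$. The point you leave vague is why this intersection is \emph{prime}: for an arbitrary ideal $N\supsetneq M$ it need not be, which is exactly why one must pass to the maximal (hence prime) ideal $I$; then $L=I\cap R$ is a non-zero prime ideal of $R$, because $hL$ is the ideal of $hR$ generated by $L'$ and hence $hL\subseteq I$, so primeness contracts. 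Consequently $\textrm{ps}(R)\subseteq L$. Now take the witness $f\in M\cap(h_1+h_2h\textrm{ps}(R))$: all coefficients of $f-h_1$ lie in $\textrm{ps}(R)\subseteq L$, so $f-h_1\in hL\subseteq I$, and since $f\in M\subseteq I$ we get $h_1\in I$, contradicting that $I$ is proper. Thus $M$ is maximal. Note that the two inclusions doing the real work, $\textrm{ps}(R)\subseteq I\cap R$ and $h(I\cap R)\subseteq I$, are precisely the steps your write-up does not secure.
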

	\begin{proof}
		suppose that $f\in M$ with $f(0)=1$ and $f(i)\in \textrm{ps}(R)$ for $i=1,\ldots,\Delta(f)$. If $I$ is a maximal ideal of $hR$ such that $I\supset M$ we obtain $h_1\in I$ as in Proposition \ref{2.3}. Then $M$ is a maximal ideal. The rest is clear.
	\end{proof}
	The intersection of a finite family of closed ideals is closed \cite[Corollaries 3.4 and
	3.5]{B3}. So for any $f\in hR$ with $\Delta(f)\geq l$ there exists the smallest closed ideal of $hR$ containing $f$. We denote this ideal again by $[f]$. If there is no closed ideal which contains $f$ we put $[f]=hR$.
	\begin{corollary}\label{2.6}
	There exists an $R$-disjoint maximal ideal of $hR$ if and only if either 	$R$ is simple or there exists $f\in h_1+h_2\textrm{ps}(R)$ such that $[f]\not=hR$.	
	\end{corollary}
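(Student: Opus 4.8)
The plan is to deduce both directions from the structural results already in hand: Proposition \ref{2.4} drives the forward implication, while Corollary \ref{2.5} together with a Zorn's lemma argument drives the converse, with the simple case handled by an explicit construction.

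For the forward direction I would begin with an $R$-disjoint maximal ideal $M$ and invoke Proposition \ref{2.4}. If possibility (i) occurs then $R$ is simple and the first alternative of the statement holds immediately. If possibility (ii) occurs then $h_2\notin M$ and there is $f\in M\cap(h_1+h_2h\textrm{ps}(R))$. Here I would first observe that $f\neq h_1$, since otherwise $h_1\in M$ would force $M=hR$; hence $\Delta(f)\geq 1$ and the closed ideal $[f]$ is defined. Because $M$ is $R$-disjoint and maximal it is maximal among the $R$-disjoint ideals, so it is prime by Lemma \ref{1.1} and therefore closed by Theorem \ref{1.4}. Since $[f]$ is by definition the smallest closed ideal containing $f$ and $M$ is such a closed ideal, I obtain $[f]\subseteq M\subsetneq hR$, so in particular $[f]\neq hR$, giving the second alternative.

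For the converse I would split according to whether $R$ is simple. If $R$ is simple I would exhibit the augmentation kernel $K=\{g\in hR\mid g(0)=0\}$ as an $R$-disjoint maximal ideal: it is the kernel of the surjective ring homomorphism $\varepsilon\colon hR\to R$ given by $\varepsilon(g)=g(0)$, so $hR/K\cong R$ is simple, which makes $K$ maximal; and $K$ is $R$-disjoint because every nonzero element of $R'$ has nonzero constant term. (Note $h_2\in K$, matching the profile of Proposition \ref{2.4}(i).) If instead $R$ is not simple, then by hypothesis there is $f\in h_1+h_2h\textrm{ps}(R)$ with $[f]\neq hR$, so $[f]$ is a nonzero proper $R$-disjoint closed ideal containing $f$. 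By Zorn's lemma---the union of a chain of $R$-disjoint ideals is again an $R$-disjoint ideal, since it meets $R'$ only in $0$ and so is proper---I may pick an ideal $M$ maximal among the $R$-disjoint ideals that contain $[f]$; such an $M$ is in fact maximal among all $R$-disjoint ideals, hence prime by Lemma \ref{1.1}. As $f\in M$ we have $M\cap(h_1+h_2h\textrm{ps}(R))\neq\emptyset$, and since $R$ is not simple Corollary \ref{2.5} promotes $M$ to a genuine maximal ideal of $hR$, which is $R$-disjoint by construction.

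I expect the main obstacle to be the simple case of the converse, where one has to produce a concrete $R$-disjoint maximal ideal and verify $hR/K\cong R$ directly from the Hurwitz multiplication; the centrality of $h_2$ (so that $K$ is genuinely two-sided and aligns with Proposition \ref{2.4}(i)) is the subtle input there. In the forward direction the only comparable care is in checking $\Delta(f)\geq1$ so that $[f]$ is defined, and in chaining Lemma \ref{1.1} and Theorem \ref{1.4} to know $M$ is closed, after which the containment $[f]\subseteq M$ is automatic.
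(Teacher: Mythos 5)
Your proposal is correct and follows essentially the same route as the paper: the forward direction rests on Proposition \ref{2.4} together with $[f]\subseteq M\neq hR$, and the converse splits on whether $R$ is simple, using Zorn's lemma to produce an ideal maximal among the $R$-disjoint ideals containing $[f]$ and then invoking Corollary \ref{2.5}. Your only deviations are cosmetic: in the simple case you take the augmentation kernel $\{g\in hR \mid g(0)=0\}$ with a direct verification where the paper simply asserts that $h_2hR$ is maximal (the two ideals coincide whenever $R$ is a $\mathbb{Q}$-algebra, which is the relevant situation here), and you spell out the appeals to Lemma \ref{1.1} and Theorem \ref{1.4} needed to see that $M$ is closed and hence contains $[f]$, steps the paper leaves implicit.
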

	\begin{proof}
		If there exists a maximal ideal $M$ of $hR$ with $M \cap R = 0$ we have that either
		$h_2 \in M$ and $R$ is simple or there exists $f\in M \cap (h_1 + h_2 \textrm{ps}(R))$. Thus $[f] \subseteq M \not= hR$.
		
			Conversely, if $R$ is simple, then $h_2hR$ is a maximal ideal. If $R$ is not simple we
			choose a polynomial $f\in  h_1 + h_2 \textrm{ps}(R)$ such that $[f]\not=hR$. Then there exists an $R$-disjoint ideal $M$ which is maximal with respect to $[f]\subseteq M$. Hence $M$ is a maximal ideal
			by Corollary \ref{2.5}.
	\end{proof}
	\begin{remark}\label{2.7}
The set of all the $R$-disjoint maximal ideals $M$ of$hR$ can now be determined as follows. If there is no $f\in  h_1 + h_2 \textrm{ps}(R)$ with $[f]\not=hR$, then $M=\emptyset$. Assume that there exists $f\in  h_1 + h_2 \textrm{ps}(R)$ with $[f]\not=hR$. Then for such a polynomial $f$ there is a uniquely determined finite family of $R$-disjoint prime ideal $P_{1f}, P_{2f},\ldots,P_{n_ff}$ such that $[f]=\bigcap_i[P_{if}^{e_i}]$, where $e_i\geq1$ \cite[Theorem 3.1]{B2}. Then $M=\{P_{if}\}$, where $f\in  h_1 + h_2 \textrm{ps}(R)$, $[f]\not=hR$ and $1\leq i\leq n_f$.
	\end{remark}
\bibliographystyle{amsplain}

\begin{thebibliography}{99}
\bibitem{m1}
M. Ahmadi, Singular ideals of skew Hurwitz polynomial rings, {\it Rend. Circ. Mat. Palermo}  \textbf{68}, (2019), 589-593.

\bibitem{m2}
M. Ahmadi, A. Moussavi,  V. Nourozi, Nilradicals of skew Hurwitz series rings, {\it Le Matematiche} \textbf{70}(1), (2015), 125--136.
\bibitem{M}  M. Ahmadi, A. Moussavi, V. Nourozi, On Skew Hurwitz Serieswise Armendariz
Rings, {\it Asian-Eur. J. Math.} \textbf{7}(3) (2014), 1450036.


\bibitem{kie} W.F. Keigher. Adjunctions and comonads in differential algebra.{\it Pacific. J. Math}. \textbf{248},
(1975), 99-112.
\bibitem{kieg} W.F. Keigher. On the ring of Hurwitz series. {\it Comm. Algebra}. \textbf{25(6)}, (1997), 1845-1859.
\bibitem{kiegh} W.F. Keigher, F.L. Pritchard. Hurwitz series as formal functions. {\it J. Pure and Applied Algebra}. \textbf{146}, (2000), 291-304.

\bibitem{v2}
V. Nourozi, A. Moussavi,  M. Ahmadi, On nilpotent elements of skew Hurwitz polynomial rings, {\it Southeast Asian Bull. Math.} \textbf{41}(2), (2017), 239--248.
\bibitem{B1}
B. Mosallaei, F. Ghanbari, S. Farivar,  and V. Nourozi,  Goppa Codes: Key to High Efficiency and Reliability in Communications. {\it arXiv preprint arXiv:2404.08132} (2024).

\bibitem{B2}
B. Mosallaei, S. Farivar, F. Ghanbari,  and V. Nourozi, The $a$-number of $y^n=x^m+x$ over finite fields. {\it arXiv preprint arXiv:2404.08149} (2024).

\bibitem{B3}
B. Mosallaei, M. Afrouzmehr,  D. Abshari, and S. Farivar. Hurwitz series rings satisfying a zero divisor property. {\it arXiv preprint arXiv:2312.10844} (2023).

\bibitem{v1} V. Nourozi, F. Rahmati, M. Ahmadi, McCoy property of Hurwitz series rings, {\it Asian-Eur. J. Math.} \textbf{14}(6), ( 2021), 2150105.
\end{thebibliography}

\end{document}